\newtheorem{theorem}{Theorem}[section]
\newtheorem{lemma}{Lemma}[section]
\newtheorem{proposition}{Proposition}[section]
\newtheorem{definition}{Definition}[section]
\numberwithin{equation}{section}
\newcommand{\bnu}{{\boldsymbol \nu}}
\newcommand{\bxi}{{\boldsymbol \xi}}
\newcommand{\bet}{{\boldsymbol \eta}}
\newcommand{\bSigma}{{\boldsymbol \Sigma}}
\def\by{{\boldsymbol y}}
\def\bw{{\boldsymbol w}}
\def\bA{{\boldsymbol A}}
\def\bm{{\boldsymbol m}}
\def\bY{{\boldsymbol Y}}
\def\bz{{\boldsymbol z}}
\def\be{{\boldsymbol e}}
\def\bZ{{\boldsymbol Z}}
\def\bg{{\boldsymbol g}}
\def\bx{{\boldsymbol x}}
\def\bX{{\boldsymbol X}}
\begin{document}
%\begin{frontmatter}
\title{Best predictors in logarithmic distance \\between positive random variables} 
\author{Henryk Gzyl\\
\noindent 
Centro de Finanzas IESA, Caracas, (Venezuela)\\
 henryk.gzyl@iesa.edu.ve}

\date{}
 \maketitle

\setlength{\textwidth}{4in}
%\end{frontmatter}

\vskip 1 truecm
\baselineskip=1.5 \baselineskip \setlength{\textwidth}{6in}
%\newpage
\begin{abstract}
The metric properties of the set in  which random variables take their values lead to relevant probabilistic concepts. For example, the mean of a random variable is a best predictor in that it minimizes the standard Euclidean distance or $L_2$ norm in an appropriate class of random variables. Similarly, the median is the same concept but when the distance is measured by the $L_1$ norm.\\ 
It so happens that a {\it geodesic distance} can be defined on the cone of strictly positive vectors in $\mathbb{R}^n$ in such a way that the minimizer of the distance to a collection of points is their geometric mean.\\
This distance induces a distance on the class of strictly positive random variables, which in turn leads to an interesting notions of conditional expectation (or best predictors) and their estimators. The appropriate version of the Law of Large Numbers and the Central Limit Theorem, can also be obtained.  We shall see that, for example,  the lognormal variables are the analogue of the Gaussian variables for the modified version of the Central Limit Theorem. 
\end{abstract}

\noindent {\bf Keywords}:Prediction in logarithmic distance, Law of large numbers in logarithmic distance, Central Limit Theorem in logarithmic distance,  Logarithmic geometry for positive random variables.  \\
\noindent {MSC 2010}: 60B99, 60B12, 60A99. 

\section{Introduction and Preliminaries} 
The study of random variables and processes taking values in spaces with geometries other than Euclidean in not new. Consider the textbooks by Kunita and Watanabe  \cite{KW}  or by Hsu \cite{Hsu} to mention just two. Along this line of work, the notion of Euclidean distance between points of the base manifold is replaced by a distance related to a Riemannian metric placed upon the tangent manifold. Such metrics lead to a notion of geodesic distance between points of the manifold, and such distance is inherited by random variables taking values in the manifold.

It should not then be surprising that the notion of best predictor of a random variable by variables of a given class, should depend on the metric of the manifold. In this note we shall consider the manifold to be $M=(0,\infty)^N,$ which is an open set in $\mathbb{R}^n,$ which is also a commutative group with respect to component wise multiplication. We postpone the study of the geometry of this group to the appendix. Here we mention that what we do is the commutative version of a more elaborate geometry in the space of symmetric matrices. The reader can check with Lang \cite{Lang} in which a relation of this geometry to Bruhat-Tits spaces is explained, or in Lawson and Lim \cite{LL} or Mohaker \cite{Moh} and references therein, where the geometric mean property in the class of symmetric matrices is established. More recently Resigny et al. \cite{AFPA} and Schwartzman \cite{Sch} used the same geometric setting to study the role of such geometry in a large variety of applications. The applications of the geometric ideas in these references concern the non-commutative case, but the simplest commutative case and its potential usefulness for positive random variables seems not to have been explored.

As mentioned in the abstract, it is the purpose of this note to explore the possible usefulness of measuring distances between positive numbers, not by regarding them as real numbers and the distance between them measured by the Euclidean norm, but by a logarithmic distance resulting from an interesting group invariant metric.

The appendix is devoted to basic geometry. There we shall examine the geometry on $M$ and prove that the distance between any two points $\bx_i,\bx_2\in M$ is given by
\begin{equation}\label{dist2}
d(\bx_1,\bx_2)^2 = \sum_{i=1}^n \left(\ln x_1(i)-\ln x_2(i)\right)^2.
\end{equation}
This makes $M$ a Tits-Bruhat space in which the distance satisfies a semi-parallelogram law. This is contained in Theorem \ref{lang}. We shall use this property to establish the uniqueness of conditional expectations. And the group structure in $M$ will be inherited in a curious way by the conditional expectations (or by the best predictors) in the logarithmic distance (\ref{dist1}).

But once we have motivated the appearance of the logarithmic distance, and the semi-parallelogram law associated to it, we shall come to the main objective of the paper, which is to consider the notion of best predictor
 (conditional expectations) in that distance, which happens to have some curious properties. These matters will be taken up in Sections 2 and 3, where we shall introduce the notion of $\ell-$expected value and $\ell-$conditional expectation, which will denote the best predictors in the logarithmic distance (hence the $\ell-$prefix) introduced in Section 2. We examine there some of the basic properties of these constructs.

In Section 4 we present the two most basic estimators, namely, that of the $\ell-$mean and that of the $\ell-$variance, and explain how the law of large numbers and the central limit theorem for these estimators relates to the standard law of large numbers and the central limit theorems.

In section 5 we prove that the notion of martingale related to the $\ell-$conditional expectation relates to the standard notion of martingale. We shall do it in discrete time, but the extension to continuous time is quite direct. 
In Section 6 we examine Markowitz portfolio theory when the distance between (gross) returns is the logarithmic distance.  

As said, we leave the study of the geometry on $M$ to the appendix. There we explain how the logarithmic distance between strictly positive vectors is actually a geodesic distance in that manifold. For that we shall present some results from Lang's \cite{Lang}, but in a simpler, commutative setup. This will provide us with a way of thinking about positive numbers (or vectors) in terms of the exponential map. The aim of the section is to derive the logarithmic distance between positive vector as a geodesic distance. The basic idea behind our constructions has been very much studied in geometry. The vectors with non-zero components act transitively on the positive vectors in such a way that an invariant scalar product (a Riemannian metric) can be defined which leads to a notion of geodesic distance.  Actually, the exponential function will correspond to the exponential map in Riemannian geometry, and it will allow us to relate (transport) probabilistic constructs from the real to the positive numbers (vectors)

\section{Best predictors in logarithmic distance}
Our set up here consists of a probability space $(\Omega, \mathcal{F},P)$ and we shall be concerned with the cone $\mathcal{C}$ of $P-$almost everywhere (a.e. for short) finite and strictly positive ($M$-valued) random variables. As usual, we identify variables that are $P-$a.e. equal. Since the operations among vectors are component wise, to reduce to the case $n=1$ only takes a simple notational change. To shorten the description of the random variables used in the statements coming up below, let us introduce the following notations. For $p>1$ (we shall be concerned with $p=1,2$ only) define:
$$L_p = \{\bX\in\mathcal{F}\,|\, E[|X_i|^p]<\infty,\;\;i=1,...,n\}$$
$$Ln_p = \{\bX\in \mathcal{C}\,|\,\ln\bX \in L_p\},\qquad LLn_p=L_p\bigcap Ln_p.$$
Let $\bX_1$ and $\bX_2$ be two strictly positive random variables in $Ln_2$. The (logarithmic) distance between them is defined to be
\begin{equation}\label{logdis}
d_\ell(\bX_1,\bX_2)^2 \equiv E\left[\sum_{i=1}^n(\ln X_1(i) - \ln X_2(i))^2\right]
\end{equation}
Since we are identifying variables that are a.e equal, $d_\ell(\bX_1,\bX_2)$ is a distance on $\mathcal{C}.$ Similarly to $\bm = E[\bX]$ being the constant that minimizes the Euclidean (squared) distance to $\bX,$ we have
\begin{proposition}\label{logmean}
With the notations introduced above, let $\bX \in Ln_1.$ The vector $\bm_\ell$ that minimizes the logarithmic distance to $\bX$ is given by
$$\bm_\ell(\bX) = \exp(E[\ln\bX]).$$
\end{proposition}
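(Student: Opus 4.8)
The plan is to exploit the fact that the logarithmic distance (\ref{logdis}) is nothing more than the ordinary Euclidean ($L_2$) squared distance applied to the componentwise logarithms, so that, after the substitution $Y := \ln\bX$, the minimization should collapse onto the classical statement that the mean is the $L_2$-best constant predictor. Since the candidate minimizer is a deterministic vector $\bm\in M$ and the sum over $i$ in (\ref{logdis}) decouples, I would first observe that minimizing $d_\ell(\bm,\bX)^2$ over $\bm$ reduces to minimizing each term $E[(\ln m(i)-\ln X_i)^2]$ separately over the single real parameter $a_i:=\ln m(i)\in\mathbb{R}$. This turns the whole problem into the scalar one of minimizing $g(a)=E[(a-Y)^2]$ over $a\in\mathbb{R}$, where $Y:=\ln X_i$ is a real random variable with $Y\in L_1$ by the hypothesis $\bX\in Ln_1$.

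For the scalar problem I would complete the square: writing
\begin{equation}\label{csq}
g(a) = (a-E[Y])^2 + E\bigl[(Y-E[Y])^2\bigr],
\end{equation}
the second summand is independent of $a$, while the first is a strictly convex function of $a$ minimized uniquely at $a=E[Y]$. Hence the optimal choice in each coordinate is $a_i=E[\ln X_i]$, and inverting the substitution $a_i=\ln m(i)$ gives $m(i)=\exp(E[\ln X_i])$, i.e. $\bm_\ell(\bX)=\exp(E[\ln\bX])$ as claimed. The strict convexity of the term $(a-E[Y])^2$ simultaneously delivers uniqueness of the minimizer, modulo the a.e. identification built into $\mathcal{C}$.

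The only point requiring real care — and the step I regard as the \emph{main obstacle} — is the well-definedness of the objective under the stated hypothesis $\bX\in Ln_1$ rather than the stronger $\bX\in Ln_2$ needed merely to make $d_\ell$ finite. When $\ln X_i\notin L_2$ the variance term in (\ref{csq}) is infinite and $g(a)=+\infty$ for every $a$, so "minimizing the distance" is vacuous if read literally. The remedy is to work with differences of distances, which remain finite as soon as $E[Y]$ is finite:
\begin{equation}\label{diffdist}
E[(a-Y)^2] - E[(b-Y)^2] = (a-b)\bigl(a+b-2E[Y]\bigr).
\end{equation}
Taking $a=E[Y]$ in (\ref{diffdist}) yields $E[(E[Y]-Y)^2]-E[(b-Y)^2] = -(b-E[Y])^2\le 0$ for every competitor $b\in\mathbb{R}$, with equality only at $b=E[Y]$. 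Thus $a_i=E[\ln X_i]$ is the unique minimizer even in this degenerate case, and the minimizing property survives the relaxation of the hypothesis to $Ln_1$. This is precisely the detail I would be most careful to state, since it is what justifies the $Ln_1$ assumption in the proposition.
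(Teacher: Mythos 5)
Your proof is correct and follows essentially the same route the paper intends: the paper dismisses this as ``computational,'' and your componentwise reduction plus completing the square is exactly that standard computation. Your extra care about the $Ln_1$ hypothesis --- replacing the possibly infinite objective by the integrable difference $(a-Y)^2-(b-Y)^2=(a-b)(a+b-2Y)$ (note this should be read as the expectation of the pointwise difference, since the two separate expectations may both be $+\infty$) --- is a genuine improvement on the paper, which silently glosses over the fact that $d_\ell$ is only defined on $Ln_2$.
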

The proof of the first assertion is computational, and the second results from an application of Jensen's inequality. When there is no risk of confusion, we shall write $\bm_\ell(\bX)=\bm_\ell.$ Keep in mind that the operations are componentwise, and that $\bm_{\ell}(\bX)_j=\exp(E[\ln X_j])$ for $j=1,...,n.$ If $\bX \in LLn_1,$  we also have $\bm_\ell \leq E[\bX].$

And the analogues of the notions of covariance and  centering are contained in the following definition.
\begin{definition}\label{logcovar}
Let now $\bX, \bY \in Ln_2.$ We define the logarithmic covariance matrix of the non-negative random variables $\bX$ and $\bY$ by
$$Cov_\ell(\bX,\bY) \equiv E[\left(\ln\bX - \ln\bm_\ell(\bX)\right)\left(\ln\bY -\ln\bm_\ell(\bY)\right)^t ] = Cov(\ln\bX,\ln\bY).$$
Let $\bSigma$ be the matrix with components $E[\left(\ln X_i - \ln m_\ell(X_i)\right)\left(\ln Y_j -\ln m_\ell(Y_j)\right)].$ If the matrix $\bSigma$ is invertible, we define the ``centered'' (in logarithmic distance) version of $\bX$ by
$$\bX^c \equiv \exp\left(\bSigma^{-1/2}\left(\ln\bX - \ln\bm_\ell(\bX)\right)\right)$$
\end{definition}
The need for the exponentiation is clear: First we have to ``undo'' the taking of the logarithms and second, the argument of the exponential function is a vector in $\mathbb{R}^n$ which yields a positive vector after exponentiation.  It takes a simple computation to verify that 
 $$\bm_\ell(\bX^c) = \mathbf{1},\;\;\;\bSigma_\ell(\bX^c) = \mathbb{I}.$$

A variation on the previous theme consists of predicting a variable $\bY$ by a variable $\bX$ in logarithmic distance. The extension of the previous result is contained in the following statement.
\begin{proposition}\label{genpred}
Let $\bY$ and $\bX$ be in $Ln_2$. Then the $\sigma(\bX)-$measurable random variable that minimizes the logarithmic distance (\ref{logdis}) to $\bY$ is given by
$$E_\ell[\bY|\bX] = \exp\left(E[\ln\bY\,|\,\bX]\right).$$
And we also have $E_\ell[\bY|\bX] \leq E[\bY\,|\,X].$
\end{proposition}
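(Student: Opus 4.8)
The plan is to recognize the minimization as the classical $L_2$-projection problem that \emph{defines} the ordinary conditional expectation, after a single change of variables. Because the distance (\ref{logdis}) is a sum over the $n$ coordinates and all operations are componentwise, it suffices to treat the scalar case, i.e.\ to minimize $E[(\ln Y - \ln Z)^2]$ over strictly positive, a.e.\ finite, $\sigma(X)$-measurable random variables $Z$. The key observation is that $Z \mapsto \ln Z$ is a bijection from this class onto the class of real-valued $\sigma(X)$-measurable random variables, with inverse $W \mapsto e^W$. Setting $W = \ln Z$, the problem becomes
\[
\min_{W}\; E\big[(\ln Y - W)^2\big], \qquad W \ \sigma(X)\text{-measurable},\ W \in L_2,
\]
which is well posed since $\bY \in Ln_2$ gives $\ln Y \in L_2$. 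This is exactly the variational characterization of $E[\ln Y \mid X]$.

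To confirm optimality I would run the standard orthogonality argument rather than merely cite it: writing $W^\ast = E[\ln Y \mid X]$ and decomposing $\ln Y - W = (\ln Y - W^\ast) + (W^\ast - W)$ for an arbitrary competitor $W$, the cross term vanishes because $W^\ast - W$ is $\sigma(X)$-measurable and $E[\ln Y - W^\ast \mid X] = 0$, so $E[(\ln Y - W)^2] = E[(\ln Y - W^\ast)^2] + E[(W^\ast - W)^2]$. Hence $W^\ast$ is the unique minimizer. Undoing the change of variables, the optimal $Z$ is $Z^\ast = \exp(E[\ln Y \mid X])$, which indeed lies in $\mathcal{C}$ since $E[\ln Y \mid X]$ is a.e.\ finite (as $\ln Y \in L_2 \subset L_1$); reassembling the coordinates yields $E_\ell[\bY \mid \bX] = \exp(E[\ln \bY \mid \bX])$.

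The inequality then follows from the conditional Jensen inequality, just as the bound $\bm_\ell \le E[\bX]$ followed from Jensen in Proposition \ref{logmean}: since $\exp$ is convex, $\exp(E[\ln \bY \mid \bX]) \le E[\exp(\ln \bY) \mid \bX] = E[\bY \mid \bX]$, understood componentwise. The only point needing genuine care — the part I would flag as the main obstacle, mild as it is — is the bookkeeping around the change of variables: one must check that the positivity constraint on $Z$ does not shrink the feasible set, i.e.\ that \emph{every} admissible $W$ is realized as $\ln Z$ for some admissible $Z$, so that the constrained minimization coincides with the unconstrained $L_2$-projection, and that the resulting exponential is a bona fide element of $\mathcal{C}$.
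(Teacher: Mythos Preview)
Your proof is correct and follows essentially the same route as the paper: recast the minimization over positive $\sigma(\bX)$-measurable variables as the ordinary $L_2$-projection of $\ln\bY$ via the change of variables $W=\ln Z$, identify the minimizer as the standard conditional expectation $E[\ln\bY\mid\bX]$, and exponentiate; the inequality is the conditional Jensen inequality. You supply more detail than the paper does---the explicit orthogonality decomposition and the check that $Z\mapsto\ln Z$ is a genuine bijection between the two feasible sets---but the underlying argument is identical.
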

The proof of Proposition \ref{genpred} follows the same pattern as the standard proof. Just notice that $\phi(\bX)=\exp\left(E[\ln\bY\,|\,\bX]\right)$ is a bounded, $\sigma(\bX)-$measurable random variable, such that  $\ln\phi(\bX)=E[\ln\bY\,|\,X]$ minimizes the Euclidean square distance to $\ln\bY.$ 

Note that the last inequality mentioned in the statement does not mean that one of the estimators is better than the other in any sense. They are minimizers in different metrics.
Also, since linear combinations in an exponent are transported as scaling and powers, we have the following analogue to linear prediction for positive random variables.
\begin{proposition}\label{expred}
Let $Y$ and $X$ be positive real variables with square integrable logarithms. The values of $a >0$ and $b\in\mathbb{R}$ that make $Y^{\#} \equiv aX^b$ the best predictor of $Y$ in the logarithmic metric, are given by
$$
\left\{\begin{array}{l}
          a = \exp\left(E[(\ln Y)] - bE[(\ln X)]\right)\\
	  b = \frac{1}{D}\left(E[\ln X \ln Y] - E[\ln X]E[\ln Y]\right)\\
	  D = E[(\ln X)^2] - (E[\ln X])^2 = \sigma^2(\ln X).
	  \end{array}\right.
$$
\end{proposition}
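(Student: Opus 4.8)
The plan is to reduce the problem to ordinary least squares by passing to logarithms. In the scalar case the quantity to be minimized over $a>0$ and $b\in\mathbb{R}$ is, by the definition (\ref{logdis}) of the logarithmic distance,
$$d_\ell\left(Y, aX^b\right)^2 = E\left[\left(\ln Y - \ln\!\left(aX^b\right)\right)^2\right] = E\left[\left(\ln Y - \ln a - b\ln X\right)^2\right],$$
where I have used $\ln(aX^b) = \ln a + b\ln X$. Setting $u = \ln Y$, $v = \ln X$ and $\alpha = \ln a$, this is exactly the mean-squared error $E\left[(u - \alpha - bv)^2\right]$ of an affine predictor of $u$ based on $v$. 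In other words, the best predictor $Y^{\#} = aX^b$ in the logarithmic metric corresponds, under the exponential/logarithm correspondence, to the ordinary affine regression of $\ln Y$ on $\ln X$.

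First I would minimize over $\alpha$ for fixed $b$. Differentiating in $\alpha$ and setting the derivative to zero gives the normal equation $\alpha = E[u] - bE[v]$, that is, $\ln a = E[\ln Y] - bE[\ln X]$; exponentiating recovers the stated formula for $a$, and automatically $a>0$. Next I would substitute this optimal $\alpha$ back and minimize over $b$, or equivalently solve the second normal equation obtained by differentiating in $b$, namely $E\left[v(u - \alpha - bv)\right] = 0$. Eliminating $\alpha$ yields
$$E[uv] - E[u]E[v] = b\left(E[v^2] - E[v]^2\right),$$
which is $b = \mathrm{Cov}(\ln X, \ln Y)/\sigma^2(\ln X)$, matching the stated expressions for $b$ and $D$.

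To finish I would verify that this stationary point is a global minimizer. After the substitution $\alpha = \ln a$, the map $(\alpha, b)\mapsto E\left[(u - \alpha - bv)^2\right]$ is a nonnegative quadratic form whose Hessian is the second-moment matrix of $(1, v)$, hence positive semi-definite; the unique stationary point is therefore the global minimum whenever this matrix is nondegenerate. The only genuine hypothesis required is $D = \sigma^2(\ln X) > 0$, i.e.\ that $\ln X$ is not a.s.\ constant, which is precisely what makes $b$ well defined. I do not anticipate a serious obstacle: the entire content is the observation that the logarithmic metric linearizes powers and products into an affine regression problem, after which the formulas are the classical least-squares solution.
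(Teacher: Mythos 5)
Your proposal is correct and follows exactly the route the paper intends: it reduces $d_\ell(Y,aX^b)^2$ to the ordinary least-squares problem $E[(\ln Y-\ln a-b\ln X)^2]$ and solves the normal equations, which is precisely the ``standard computation starting from the definition'' that the paper invokes without writing out. Your added remark that $\sigma^2(\ln X)>0$ is needed for $b$ to be well defined is a small but genuine point the paper leaves implicit.
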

The proof follows the standard computation starting from the definition of $d(\bY,a\bX^b)_\ell.$  Certainly the result is natural as the linear structure of $\mathbb{R}$ is transferred multiplicatively onto $(0,\infty)$ by the exponential mapping. Also, the extension to random variables taking values in higher dimensional $M$ is direct, but notationally more cumbersome.\\
A simple computation leads to
$$m_\ell(Y^\# )=E_\ell[Y^\#]=e^{E[ln Y]},\;\;\;\;\sigma_\ell(Y^\#)=b^2\sigma^2(\ln X).$$

\section{Logarithmic conditional expectation and some of its properties }
Here we extend the semi-parallelogram property mentioned in Theorem (\ref{lang}) to strictly positive random variables.
\begin{lemma}\label{splrv}
All random variables mentioned are supposed to be in $Ln_2.$ Let $\bX_1$ and $\bX_2$ be as mentioned. Then there exits $\bZ \in Ln_2$ such that for any $\bY$ we have
$$d(\bX_1,\bX_2)^2_\ell + 4d(\bZ,\bY)^2_\ell \leq 2d(\bY,\bX_1)^2_\ell +2d(\bY,\bX_2)^2_\ell.$$
\end{lemma}
To prove this, use the second comment after Theorem (\ref{lang}) at every $\omega\in \Omega$ to obtain the pointwise version of the semi-parallelogram property, and then integrate with respect to $P.$ Clearly $\bZ=(\bX_1\bX_2)^{1/2}\in LLn_2.$ Below we apply this to obtain the uniqueness of the extension of the standard notion of conditional expectation.	
\begin{theorem}\label{CE}
Let $\mathcal{G}\subset\mathcal{F}$ be a $\sigma-$ algebra, and let $\bY$ be non-negative with square integrable logarithm. Then, the unique -up to a set of $P$ measure $0$-, positive $\bX^*\in \mathcal{G}$ that makes $d(\bY,\bX)^2_\ell$ minimum over $\{\bX \in \mathcal{G},\;\bX > 0, E[(\ln\bX)^2] <\infty\},$ is given by $\bX^*=\exp\left(E[\ln\bY\,|\,\mathcal{G}]\right).$
To be consistent with the notations introduced above, we shall write $\bX^*=E_\ell[\bY\,|\,\mathcal{G}].$
\end{theorem}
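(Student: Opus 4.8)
The plan is to establish two things: that the candidate $\bX^* = \exp\left(E[\ln\bY\,|\,\mathcal{G}]\right)$ is indeed a minimizer of $d(\bY,\bX)^2_\ell$ over the admissible class, and that this minimizer is unique up to $P$-null sets. For the minimization claim, I would transport the problem to the real line via the logarithm, exactly as in the proof of Proposition \ref{genpred}. Writing $\bU = \ln\bY$ and noting that any admissible $\bX$ corresponds bijectively to a $\mathcal{G}$-measurable real vector $\bW = \ln\bX$ with $E[|\bW|^2] < \infty$, the logarithmic distance becomes $d(\bY,\bX)^2_\ell = E\left[\sum_i (U_i - W_i)^2\right]$. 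This is precisely the ordinary $L_2$ distance between $\bU$ and the $\mathcal{G}$-measurable $\bW$, whose unique minimizer is the classical conditional expectation $\bW^* = E[\bU\,|\,\mathcal{G}] = E[\ln\bY\,|\,\mathcal{G}]$. Exponentiating recovers $\bX^* = \exp\left(E[\ln\bY\,|\,\mathcal{G}]\right)$, and the bijectivity of $\ln$ on the cone $\mathcal{C}$ guarantees we have not lost or gained any candidates in the translation.

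For uniqueness, I would offer two routes and likely present the second as the cleaner one. The first route simply invokes the known uniqueness of the $L_2$-projection onto the closed subspace of $\mathcal{G}$-measurable square-integrable functions: any two minimizers $\bW_1^*, \bW_2^*$ of the Euclidean problem agree $P$-a.e., so their exponentials agree $P$-a.e. The second route, which is presumably why Lemma \ref{splrv} was proved immediately before, is to argue directly on the cone using the semi-parallelogram inequality. Suppose $\bX_1^*$ and $\bX_2^*$ were two distinct minimizers achieving the minimal distance $\delta^2 = d(\bY,\bX_i^*)^2_\ell$. Applying Lemma \ref{splrv} with $\bX_1 = \bX_1^*$, $\bX_2 = \bX_2^*$, and taking $\bY$ to be the target variable yields the midpoint $\bZ = (\bX_1^*\bX_2^*)^{1/2}$ satisfying $d(\bX_1^*,\bX_2^*)^2_\ell + 4\,d(\bZ,\bY)^2_\ell \leq 2\delta^2 + 2\delta^2 = 4\delta^2$. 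Since $\bZ$ is itself admissible (it is $\mathcal{G}$-measurable, positive, and in $LLn_2$ by the lemma), we have $d(\bZ,\bY)^2_\ell \geq \delta^2$, forcing $d(\bX_1^*,\bX_2^*)^2_\ell \leq 0$, hence $\bX_1^* = \bX_2^*$ $P$-a.e.

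The step requiring the most care is verifying that the class over which we minimize is genuinely the image under exponentiation of the $L_2$ space of $\mathcal{G}$-measurable functions, so that the classical projection theorem applies without gaps. Specifically, one must check that $E[(\ln\bX)^2] < \infty$ together with $\mathcal{G}$-measurability of $\bX$ is equivalent to $\ln\bX$ being a $\mathcal{G}$-measurable element of $L_2$, which is immediate since $\ln$ is a measurable bijection of $(0,\infty)$ onto $\mathbb{R}$, and that $\bX^*$ so constructed is itself in the admissible class (positivity is automatic from the exponential, and square-integrability of $\ln\bX^* = E[\ln\bY\,|\,\mathcal{G}]$ follows from conditional Jensen and the hypothesis $E[(\ln\bY)^2] < \infty$). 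I would argue the semi-parallelogram uniqueness route is the intended one, since it exhibits the cone $\mathcal{C}$ as a genuine Bruhat-Tits-type space in which minimizers of distance functionals are unique, mirroring the geometric theme developed in the appendix rather than merely re-deriving a Hilbert space fact.
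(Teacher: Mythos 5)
Your proposal is correct and follows essentially the same route as the paper: existence by transporting to the classical $L_2$ projection of $\ln\bY$ onto the $\mathcal{G}$-measurable square-integrable functions, and uniqueness via the semi-parallelogram inequality of Lemma \ref{splrv} applied to two putative minimizers with midpoint $\bZ=(\bX_1^*\bX_2^*)^{1/2}$. Your added care about the admissible class being the exponential image of the $\mathcal{G}$-measurable $L_2$ functions, and about $\bZ$ itself being admissible, only makes explicit what the paper leaves implicit.
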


\begin{proof}
The existence follows the same pattern of proof as the propositions in the previous section, that is 
$E[\ln\bY\,|\,\mathcal{G}]$ minimizes the ordinary  square distance to $\ln\bY,$ and it is the unique (up to sets of $P$ measure $0$). We shall use the semi-parallelogram property to verify the uniqueness. For that, let $\bX$ some other possible minimizer of the logarithmic distance. Now set $\bZ=\sqrt{\bX\bX^*}$ (keep in mind the second comment after Theorem (\ref{lang})), and observe that according to the semi-parallelogram property
$$d(\bX^*,\bX)^2_\ell + 4d(\bY,\bZ)^2_\ell \leq 2d(\bY,\bX^*)^2_\ell + 2d(\bY,\bX)^2_\ell.$$
Since by definition, $d(\bY,\bZ)^2_\ell$ is larger than any of the two distances in the right hand side of the inequality, it follows that necessarily $d(\bX^*,\bX)^2_\ell=0.$
\end{proof}

Let us now verify some standard and non standard properties of the notion of conditional expectation introduced above. Keep in mind that the arithmetic operations with positive vectors are componentwise.

\begin{theorem}\label{propCE}
 Let $\bY\in LLn_2$ and let $\mathcal{H}\subset\mathcal{G}$ be two sub-$\sigma-$algebras of $\mathcal{F}.$ Then, up to a set of measure $0,$ the following hold:\\
{\bf 1)} $E_\ell[\bY\,|\{\emptyset,\Omega\}] = E_\ell[\bY].$\\
{\bf 2)}$E_\ell[E_\ell[\bY | \mathcal{G}]\,|\mathcal{H}] = E_\ell[\bY\,|\mathcal{H}].$\\
{\bf 3)}Let $\bY_1,...,\bY_k$ be in $LLn_2,$ and $w_i \in \mathbb{R}.$ The analogue of the linearity property of the standard conditional expectation is the following multiplicative property:
$$E_\ell[\prod_{i=1}^k\bY_i^{w_i}\,|\,\mathcal{G}] = \prod_{i=1}^k\Big(E_\ell[\bY|\mathcal{G}]\Big)^{w_i}$$
{\bf 4)} If $\bY$ is independent of $\mathcal{G}$ in the standard sense, then
$E_\ell[\bY\,|\mathcal{G}] = E_\ell[\bY].$
\end{theorem}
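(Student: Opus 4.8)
The plan is to exploit the single structural fact underlying the whole construction: by definition $E_\ell[\bY\,|\,\mathcal{G}] = \exp(E[\ln\bY\,|\,\mathcal{G}])$, so taking logarithms gives the conjugacy relation
$$\ln E_\ell[\bY\,|\,\mathcal{G}] = E[\ln\bY\,|\,\mathcal{G}].$$
Thus $E_\ell$ is nothing but ordinary conditional expectation read through the change of variables $\bY \mapsto \ln\bY$ and back through $\exp$. I would obtain each of the four assertions by the same three-move recipe: apply $\ln$, invoke the corresponding classical property of $E[\cdot\,|\,\mathcal{G}]$ for the real-valued vector $\ln\bY$, and exponentiate. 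Since $\bY\in LLn_2$ guarantees $\ln\bY\in L_2\subset L_1$, every ordinary conditional expectation written below is well defined, and all equalities are understood up to $P$-null sets.

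For 1), conditioning on the trivial $\sigma$-algebra collapses ordinary conditional expectation to the mean, $E[\ln\bY\,|\,\{\emptyset,\Omega\}] = E[\ln\bY]$; exponentiating and recalling $\bm_\ell(\bY)=\exp(E[\ln\bY])$ from Proposition \ref{logmean} gives the claim. For 4), independence of $\bY$ from $\mathcal{G}$ forces independence of the measurable image $\ln\bY$ from $\mathcal{G}$, whence $E[\ln\bY\,|\,\mathcal{G}] = E[\ln\bY]$ and exponentiating yields $E_\ell[\bY\,|\,\mathcal{G}]=E_\ell[\bY]$.

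For the tower property 2), I would first rewrite the inner term via the conjugacy relation, $\ln E_\ell[\bY\,|\,\mathcal{G}] = E[\ln\bY\,|\,\mathcal{G}]$, so that
$$E_\ell\big[E_\ell[\bY\,|\,\mathcal{G}]\,\big|\,\mathcal{H}\big] = \exp\Big(E\big[E[\ln\bY\,|\,\mathcal{G}]\,\big|\,\mathcal{H}\big]\Big);$$
the classical tower property for $\mathcal{H}\subset\mathcal{G}$ then reduces the exponent to $E[\ln\bY\,|\,\mathcal{H}]$, which is exactly $\ln E_\ell[\bY\,|\,\mathcal{H}]$. For the multiplicative property 3), the key is that logarithms turn the product $\prod_{i}\bY_i^{w_i}$ into the linear combination $\sum_{i} w_i\ln\bY_i$; linearity of ordinary conditional expectation gives $E[\sum_{i} w_i\ln\bY_i\,|\,\mathcal{G}] = \sum_{i} w_i E[\ln\bY_i\,|\,\mathcal{G}]$, and exponentiating this sum reproduces $\prod_{i} (E_\ell[\bY_i\,|\,\mathcal{G}])^{w_i}$. (I note that the right-hand side in the statement should read $E_\ell[\bY_i\,|\,\mathcal{G}]$, with the index $i$ on $\bY$.)

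None of the steps presents a genuine obstacle; the difficulty was already absorbed into setting up the definition so that $E_\ell$ is conjugate to $E[\cdot\,|\,\mathcal{G}]$, and the a.e.\ identities survive $\exp$ and $\ln$ because these are continuous bijections. The only point demanding care is bookkeeping on integrability: in 3) one must check that $\prod_{i}\bY_i^{w_i}$ again lies in the class on which $E_\ell$ is defined, i.e.\ that its logarithm $\sum_{i} w_i\ln\bY_i$ is square integrable. This holds because each $\ln\bY_i\in L_2$ and finite linear combinations of $L_2$ vectors remain in $L_2$; the hypothesis $\bY_i\in LLn_2$ is precisely what secures this, and likewise ensures the conditional expectations in 1), 2) and 4) are finite a.e.
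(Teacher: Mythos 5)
Your proposal is correct and follows essentially the same route as the paper: reduce each assertion to the corresponding classical property of $E[\,\cdot\,|\,\mathcal{G}]$ applied to $\ln\bY$ via the conjugacy $\ln E_\ell[\bY\,|\,\mathcal{G}]=E[\ln\bY\,|\,\mathcal{G}]$, then exponentiate. Your added remarks on integrability and on the missing index $i$ in the right-hand side of assertion 3 are both accurate and slightly more careful than the paper's own treatment.
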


\begin{proof}
The first assertion is simple consequence of the definition . To verify the second we start from the definition and carry on:.
$$E_\ell[E_\ell[\bY |\mathcal{G}]\,|\mathcal{H}] = \exp\Big(E\big[\ln\exp E[\ln\bY |\mathcal{G}|\mathcal{H}\big]\Big) = \exp\Big(E[E[\ln\bY |\mathcal{G}|\mathcal{H}]\Big),$$
\noindent and now apply the standard tower property of conditional expectations to the complete the proof of the assertion.\\
It is in the third property where the logarithmic distance plays a curious role. The proof of the assertion is a simple computation starting from the definition:
$$E_\ell[\prod_{i=1}^k\bY_i^{w_i}\,|\,\mathcal{G}]=\exp\Big(E\big[\sum w_i\ln\bY_i\,|\mathcal{G}\big]\Big) = \prod_{i=1}^k\Big(E_\ell[\bY|\mathcal{G}]\Big)^{w_i}.$$
The fourth property is also simple to establish using the definition and the standard notion of independence.
\end{proof}

\section{Estimators and limit theorems}
In this section we shall consider the case $n=1.$ The notation is a bit simpler in this case. That is, we shall forget about the symbols in boldface for a while.

Making use of Proposition (\ref{geomean}) the following definition is clear:
\begin{definition}\label{empmean}
Let $X_1,...,X_K$ be positive random variables. We define their empirical logarithmic mean by
$$\hat{m}_\ell(X) = \left(\prod_{j=1}^KX_j\right)^{1/K}.$$
\end{definition}
And a the standard law of large numbers becomes:
\begin{theorem}\label{LLN}
Let $X_j,\,j \geq 1$ be a collection of i.i.d. positive random variables defined on $(\Omega,\mathcal{F},P)$ having finite logarithmic variance $\sigma_\ell^2$ and mean $m_\ell.$ Then $\hat{X}_\ell$ is an unbiased estimator of the logarithmic mean $m_\ell(X)$ and
$$\hat{m}_\ell(X) = \left(\prod_{j=1}^KX_j\right)^{1/K} \rightarrow m_\ell$$
\noindent almost surely w.r.t. $P$ as $K \rightarrow \infty. $
\end{theorem}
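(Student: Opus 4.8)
The plan is to transport the classical Strong Law of Large Numbers from $\mathbb{R}$ to the positive half-line by means of the logarithm. The starting observation is that taking logarithms turns the geometric average into an arithmetic one: writing $S_K = \frac{1}{K}\sum_{j=1}^K \ln X_j$, we have $\ln\hat{m}_\ell(X) = S_K$, so that $\hat{m}_\ell(X) = \exp(S_K)$. Because the $X_j$ are i.i.d. and strictly positive, the real random variables $\ln X_j$ are themselves i.i.d., with common mean $E[\ln X_1] = \ln m_\ell$ (this is exactly the defining relation $m_\ell = \exp(E[\ln X])$ of Proposition \ref{logmean}) and common variance $\sigma_\ell^2 < \infty$ by hypothesis.

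First I would settle unbiasedness. In the present logarithmic framework the natural notion of unbiasedness is that the $\ell$-expected value of the estimator equals the target, i.e. $E_\ell[\hat{m}_\ell(X)] = m_\ell$. Using the definition of $E_\ell$ together with linearity of the ordinary expectation,
$$E_\ell[\hat{m}_\ell(X)] = \exp\big(E[\ln\hat{m}_\ell(X)]\big) = \exp\Big(\frac{1}{K}\sum_{j=1}^K E[\ln X_j]\Big) = \exp(\ln m_\ell) = m_\ell,$$
which holds for every fixed $K$ and requires only that $\ln X_1$ be integrable.

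Next I would establish the almost sure convergence. Since $\ln X_1$ has finite variance it is in particular integrable, so Kolmogorov's Strong Law of Large Numbers applies to the i.i.d. sequence $(\ln X_j)_{j \geq 1}$ and yields $S_K \to E[\ln X_1] = \ln m_\ell$ almost surely as $K \to \infty$. The exponential map is continuous on $\mathbb{R}$, and almost sure convergence is preserved under continuous mappings, hence $\hat{m}_\ell(X) = \exp(S_K) \to \exp(\ln m_\ell) = m_\ell$ almost surely, which is the assertion.

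There is no serious obstacle here; the entire content is that the logarithm is the isometry carrying the logarithmic geometry on $(0,\infty)$ to the Euclidean geometry on $\mathbb{R}$, so that the classical limit theorem transfers directly. The only points that deserve care are bookkeeping ones: identifying $E[\ln X_1]$ with $\ln m_\ell$ via the definition of the $\ell$-mean, checking that the finite-logarithmic-variance hypothesis supplies the integrability needed for the SLLN, and---if one prefers unbiasedness in the ordinary multiplicative sense---noting that $E[\hat{m}_\ell(X)] = m_\ell$ is generally \emph{false} by Jensen's inequality, so that unbiasedness must be read in the $\ell$-sense.
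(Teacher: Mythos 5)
Your proof is correct and follows essentially the same route as the paper: rewrite the geometric mean as $\exp\bigl(\frac{1}{K}\sum_{j=1}^K \ln X_j\bigr)$, apply the classical strong law to the i.i.d.\ integrable sequence $\ln X_j$, and conclude by continuity of the exponential. Your explicit reading of ``unbiased'' in the $\ell$-sense (and the remark that ordinary unbiasedness fails by Jensen) is a useful clarification of a point the paper leaves implicit.
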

The proof is clear. Since 
$$\hat{m}_\ell(X) = \exp\left(\frac{1}{K}\sum_{j=1}^K\ln X_j\right),$$
\noindent we can invoke the strong law of large numbers, see Borkhar \cite{Bork} or Jacod and Protter \cite{JP} , plus the continuity of the exponential function to obtain our assertion. That $\ln\hat{m}_\ell(X)$ has mean $m_\ell(X)$ is clear.

In analogy with the standard notion of empirical variance, we can introduce
\begin{definition}\label{empvar}
With the notations introduced above and under the assumptions in Theorem \ref{LLN}, the empirical estimator of the logarithmic variance is defined by
 $$\hat{\sigma}^2_\ell(X) = \frac{1}{K-1}\sum_{j=1}^K \left(\ln X_j - \ln\hat{m}_\ell(X)\right)^2.$$
\end{definition}
And as in basic statistics we have
\begin{theorem}\label{LLNempvar}
With the notations introduced above, and under the assumptions of Theorem (\ref{LLN}), $\hat{\sigma}^2_\ell(X)$ is an unbiased estimator of the logarithmic variance and 
$$\hat{\sigma}^2_\ell(X) \rightarrow \sigma_\ell^2(X)$$
\noindent almost surely w.r.t. $P$ as $K\rightarrow\infty.$
\end{theorem}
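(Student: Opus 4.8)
The plan is to reduce the entire statement to the classical theory of the sample variance by passing to the logarithmic scale. First I would set $Z_j = \ln X_j$, so that the $Z_j$ are i.i.d. real-valued random variables with mean $\mu := E[Z_j] = \ln m_\ell$ and variance $\sigma_\ell^2$; the hypothesis (inherited from Theorem \ref{LLN}) that the logarithmic variance is finite, together with finiteness of $m_\ell$, guarantees $E[Z_j^2] = \sigma_\ell^2 + \mu^2 < \infty$. The crucial observation is that $\ln\hat{m}_\ell(X) = \frac{1}{K}\sum_{j=1}^K \ln X_j = \bar Z$ is exactly the sample mean of the $Z_j$, so that the estimator of Definition \ref{empvar},
\[
\hat{\sigma}^2_\ell(X) = \frac{1}{K-1}\sum_{j=1}^K (Z_j - \bar Z)^2,
\]
is precisely the usual (Bessel-corrected) sample variance of the real variables $Z_1,\dots,Z_K$. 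From this point the two assertions are entirely standard.

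For unbiasedness I would use the algebraic identity $\sum_{j=1}^K (Z_j-\bar Z)^2 = \sum_{j=1}^K Z_j^2 - K\bar Z^2$, take expectations term by term using $E[Z_j^2] = \sigma_\ell^2 + \mu^2$ and $E[\bar Z^2] = \mathrm{Var}(\bar Z) + \mu^2 = \sigma_\ell^2/K + \mu^2$, and observe that the $\mu^2$ contributions cancel, leaving $E\big[\sum_j (Z_j-\bar Z)^2\big] = (K-1)\sigma_\ell^2$. Dividing by $K-1$ gives $E[\hat\sigma^2_\ell(X)] = \sigma_\ell^2$. Note that the normalization $1/(K-1)$ is exactly what makes this cancellation produce an exact equality.

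For the almost sure convergence I would rewrite
\[
\hat\sigma^2_\ell(X) = \frac{K}{K-1}\left(\frac{1}{K}\sum_{j=1}^K Z_j^2 - \bar Z^2\right),
\]
and then apply the strong law of large numbers twice: to the i.i.d. sequence $\{Z_j\}$, giving $\bar Z \to \mu$ a.s. and hence $\bar Z^2 \to \mu^2$, and to the i.i.d. sequence $\{Z_j^2\}$, whose common mean $E[Z_j^2]$ is finite by the remark above, giving $\frac{1}{K}\sum_j Z_j^2 \to E[Z_j^2]$ a.s. Since $K/(K-1)\to 1$, these combine to yield $\hat\sigma^2_\ell(X) \to E[Z_j^2] - \mu^2 = \sigma_\ell^2$ almost surely, as claimed.

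There is no serious obstacle here: the whole argument is the single observation that the logarithmic variance estimator is nothing but the ordinary sample variance of the $\ln X_j$, so the result is inherited verbatim from the Euclidean case. The only point requiring a moment's care is the integrability bookkeeping --- checking that finite logarithmic variance together with finite logarithmic mean yields $E[(\ln X)^2]<\infty$, which is precisely what licenses applying the strong law of large numbers to the squared sequence $\{(\ln X_j)^2\}$.
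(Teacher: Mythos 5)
Your proof is correct and is exactly the argument the paper intends: the paper offers no explicit proof, merely prefacing the theorem with ``as in basic statistics we have,'' and your reduction of $\hat{\sigma}^2_\ell(X)$ to the ordinary Bessel-corrected sample variance of $Z_j=\ln X_j$, followed by the standard unbiasedness computation and a double application of the strong law of large numbers, is precisely that classical argument. Your remark on the integrability bookkeeping (finite logarithmic mean and variance giving $E[(\ln X)^2]<\infty$) is a detail the paper leaves implicit, and it is handled correctly.
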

But perhaps more interesting is the following version of the central limit theorem. It brings to the fore the role of lognormal variables as the analogue to the Gaussian random variables in the class of positive variables.

\begin{theorem}\label{CLT}
Suppose that $X_j, j\geq 1$ are a collection of i.i.d. random variables defined on a probability space $(\Omega,\mathcal{F},P)$ with logarithmic mean $m_\ell = E[\ln X_j]$ and $E[(\ln X_i)^2] < \infty.$ Then
$$\Big(\prod_{j=1}^K\frac{X_j}{m_\ell}\Big)^{1/\sqrt{K}} \rightarrow e^{X}$$
\noindent in probability as $K \rightarrow \infty,$ where $X \sim N(0,\sigma^2_\ell).$
\end{theorem}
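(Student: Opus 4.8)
The plan is to reduce the statement to the classical central limit theorem by passing to logarithms, in exactly the way Proposition \ref{logmean} and Theorem \ref{LLN} reduce logarithmic constructs to their Euclidean counterparts. First I would set $Y_j = \ln X_j$, so that the $Y_j$ are i.i.d. real random variables with mean $\mu = E[\ln X_j]$ and, since $E[(\ln X_j)^2] < \infty$, finite variance $\sigma_\ell^2 = \sigma^2(\ln X_j)$. I read the quotient $X_j/m_\ell$ with $m_\ell$ denoting the logarithmic mean $m_\ell(X) = \exp(E[\ln X_j])$ of Proposition \ref{logmean}, so that
$$\ln\frac{X_j}{m_\ell} = \ln X_j - \ln m_\ell(X) = Y_j - \mu.$$

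Next I would take the logarithm of the quantity whose limit is sought. Because the exponent $1/\sqrt{K}$ and the product both collapse into a single sum under $\ln$,
$$\ln\Big(\prod_{j=1}^K \frac{X_j}{m_\ell}\Big)^{1/\sqrt{K}} = \frac{1}{\sqrt{K}}\sum_{j=1}^K (Y_j - \mu) =: S_K.$$
The classical central limit theorem, applied to the centered i.i.d. sequence $\{Y_j - \mu\}$, then gives $S_K \to N(0,\sigma_\ell^2)$ in distribution. Finally I would transport this convergence back to the positive axis through the exponential map: since $\exp$ is continuous, the continuous mapping theorem yields
$$\Big(\prod_{j=1}^K \frac{X_j}{m_\ell}\Big)^{1/\sqrt{K}} = e^{S_K} \to e^{X}, \qquad X \sim N(0,\sigma_\ell^2),$$
in distribution, which is the asserted limit.

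The argument is short, and I do not expect a serious analytic obstacle: every step merely transcribes a standard real-line limit theorem through the order-preserving homeomorphism $t \mapsto \ln t$ of $(0,\infty)$ onto $\mathbb{R}$. The one point genuinely requiring care is the bookkeeping of the normalizing constant $m_\ell$. One must divide by the logarithmic (geometric) mean $\exp(E[\ln X_j])$ and not by $E[X_j]$, because only the former cancels the drift $\mu$ and leaves a centered sum to which the central limit theorem applies; with any other normalization the exponent $S_K$ would diverge. A secondary point of care is the mode of convergence: the argument delivers convergence in law, and the phrase ``in probability'' in the statement should be read in this weak sense, since $e^X$ designates a fixed lognormal distribution rather than a pointwise limit of the left-hand side. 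This is precisely the sense in which the lognormal law plays, for the logarithmic distance, the role that the Gaussian law plays for the Euclidean distance.
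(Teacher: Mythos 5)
Your proof is correct and follows essentially the same route as the paper's: exponentiate the normalized sum $\frac{1}{\sqrt{K}}\sum_{j=1}^K(\ln X_j - \ln m_\ell)$, apply the classical central limit theorem, and invoke continuity of $\exp$. Your closing remark is in fact a useful correction rather than a mere caveat: the convergence is in distribution (the paper's proof also says ``in probability,'' which is a misstatement of the CLT), and your reading of $m_\ell$ as $\exp(E[\ln X_j])$ --- consistent with Proposition \ref{logmean} but not with the literal ``$m_\ell = E[\ln X_j]$'' in the theorem statement --- is the one that makes the centering work.
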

\begin{proof}
Observe that
$$\Big(\prod_{j=1}^K\frac{X_j}{m_\ell}\Big)^{1/\sqrt{K}} = \exp\Big(\frac{1}{\sqrt{K}}\sum_{j=1}^K(\ln X_j-\ln m_\ell)\Big).$$
From the standard proof of the central limit theorem we know that  $\frac{1}{\sqrt{K}}\sum_{j=1}^K(\ln X_j-\ln m_\ell)$ converges in probability to an $N(0,\sigma^2_\ell)$ random variable and therefore, since the exponential function is continuous, the same convergence holds for $\left(\prod_{j=1}^K\frac{X_j}{m_\ell}\right)^{1/\sqrt{K}}. $ Thus concludes the proof of our assertion.
\end{proof}

\section{$\ell-$martingales in discrete time}
As there is a notion of $\ell-$conditional expectation, there must be a corresponding notion of $\ell-$ martingale. In this section we examine some very simple of its properties. As usual, the basic setup consists of the probability space $(\Omega,\mathcal{F},P)$ and a filtration $\{\mathcal{F}_n,\,n \geq 0\}.$
\begin{theorem}\label{mart}
The $M-$valued process $\{\bX_n;n \geq 0\}$ such that $\bX_n\in \mathcal{F}_n$ and $\bxi_n=\ln\bX_n$ are square integrable, is an $\ell$-martingale (resp. sub-martingale, super-martingale) if and only if $\{\bxi_n\}$is an ordinary martingale.

Also, if $\bX_n$ is an $\ell-$martingale, it is an ordinary sub-martingale.
\end{theorem}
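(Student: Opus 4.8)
The plan is to exploit the defining identity from Theorem \ref{CE}, namely $E_\ell[\bY\,|\,\mathcal{G}] = \exp\left(E[\ln\bY\,|\,\mathcal{G}]\right)$, together with the fact that the exponential map acts componentwise as a strictly increasing bijection from $\mathbb{R}^N$ onto $M$. Writing $\bxi_n = \ln\bX_n$, this identity reads
$$E_\ell[\bX_{n+1}\,|\,\mathcal{F}_n] = \exp\left(E[\bxi_{n+1}\,|\,\mathcal{F}_n]\right),$$
so that every defining (in)equality for the $\ell$-process is obtained by applying $\exp$ to the corresponding (in)equality for $\{\bxi_n\}$, and conversely by applying $\ln$. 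Since $\exp$ and $\ln$ are continuous, hence Borel, adaptedness transfers both ways: $\bX_n$ is $\mathcal{F}_n$-measurable if and only if $\bxi_n$ is, and the square integrability of $\bxi_n$ guarantees that the ordinary conditional expectations on the right are well defined.

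First I would settle the martingale case. Because $\exp$ is injective,
$$E_\ell[\bX_{n+1}\,|\,\mathcal{F}_n] = \bX_n \iff \exp\left(E[\bxi_{n+1}\,|\,\mathcal{F}_n]\right) = \exp(\bxi_n) \iff E[\bxi_{n+1}\,|\,\mathcal{F}_n] = \bxi_n,$$
which is precisely the statement that $\{\bxi_n\}$ is an ordinary martingale. The sub- and super-martingale cases follow identically: since $\exp$ is strictly increasing componentwise, the inequality $E_\ell[\bX_{n+1}\,|\,\mathcal{F}_n] \geq \bX_n$ (resp. $\leq$) holds if and only if $E[\bxi_{n+1}\,|\,\mathcal{F}_n] \geq \bxi_n$ (resp. $\leq$), where $\geq$ denotes the componentwise order on $M$ inherited from $\mathbb{R}^N$. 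This establishes the first assertion.

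For the second assertion, suppose $\{\bX_n\}$ is an $\ell$-martingale, so that by the first part $E[\bxi_{n+1}\,|\,\mathcal{F}_n] = \bxi_n$. I would then apply the conditional Jensen inequality to the (componentwise) convex function $\exp$ to obtain
$$E[\bX_{n+1}\,|\,\mathcal{F}_n] = E\left[\exp(\bxi_{n+1})\,|\,\mathcal{F}_n\right] \geq \exp\left(E[\bxi_{n+1}\,|\,\mathcal{F}_n]\right) = \exp(\bxi_n) = \bX_n,$$
which is exactly the ordinary sub-martingale inequality.

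The one point that requires care — and the only genuine obstacle — is integrability in the second assertion: to call $\{\bX_n\}$ an ordinary sub-martingale one needs $\bX_n = \exp(\bxi_n) \in L_1$, and square integrability of $\bxi_n$ alone does not force this, since the relevant quantity $E[\exp(\bxi_n)]$ is a moment generating function that may diverge. I would therefore add the standing hypothesis that each $\bX_n \in L_1$. With that caveat in place the argument is otherwise routine, its entire content being the order- and equality-preserving correspondence $\bxi_n \leftrightarrow \bX_n=\exp(\bxi_n)$ together with the convexity of $\exp$.
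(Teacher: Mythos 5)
Your argument is essentially identical to the paper's: both parts rest on the identity $E_\ell[\bX_{n+k}\,|\,\mathcal{F}_n]=\exp\left(E[\bxi_{n+k}\,|\,\mathcal{F}_n]\right)$, the strict monotonicity of $\exp$ for the equivalence, and conditional Jensen for the ordinary sub-martingale conclusion. Your observation that the second assertion tacitly needs $\bX_n=\exp(\bxi_n)\in L_1$ (which square integrability of $\bxi_n$ does not guarantee) is a fair and worthwhile caveat that the paper passes over in silence.
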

\begin{proof}For $n\geq 0$ and $k \geq 1$
$$E_\ell[\bX_{n+k}|\mathcal{F}_n] = e^{E[\bxi_{n+k}|\mathcal{F}_n]}$$
\noindent from which the assertion of the theorem drops out. For the second assertion note that
$$E_\ell[\bX_{n+k}|\mathcal{F}_n] = \bX_n = e^{\bxi_n} = e^{E[\bxi_{n+k}|\mathcal{F}_n]} \leq E[e^{\bxi_{n+k}}|\mathcal{F}_n] = E[\bX_{n+k}|\mathcal{F}_n]$$
The middle step drops out from Jensen's inequality.
\end{proof}

The corresponding version of the Doob decomposition theorem, say for sub-martingales, goes as follows.
\begin{theorem}\label{DD}
With the notations introduced above, let $\{\bX_n\}$ be an $M-$valued $\ell-$sub-martingale. Then there exist an $M-$valued $\ell-$martingale $\{\bY_n\}$ and an increasing $M-$valued process $\bA_n,$ such that $\bX_n=\bY_n\bA_n.$
\end{theorem}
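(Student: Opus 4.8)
The plan is to transport the classical Doob decomposition through the logarithm, in the same spirit in which Theorem~\ref{mart} transports the martingale property. First I would set $\bxi_n=\ln\bX_n$. By hypothesis $\{\bX_n\}$ is an $M$-valued $\ell$-sub-martingale whose logarithms are square integrable, so Theorem~\ref{mart} guarantees that $\{\bxi_n\}$ is an ordinary, componentwise sub-martingale adapted to $\{\mathcal{F}_n\}$. The whole argument then consists of decomposing $\{\bxi_n\}$ additively in $\mathbb{R}^n$ and pushing the pieces back to $M$ by the exponential map.

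Concretely, I would apply the standard scalar Doob decomposition in each coordinate. Put $\balpha_0=\mathbf{0}$ and
$$\balpha_n=\sum_{k=1}^n\big(E[\bxi_k\,|\,\mathcal{F}_{k-1}]-\bxi_{k-1}\big),\qquad \bet_n=\bxi_n-\balpha_n.$$
A one-line computation gives $E[\bet_{n+1}-\bet_n\,|\,\mathcal{F}_n]=0$, so $\{\bet_n\}$ is an ordinary martingale, and each increment $\balpha_{n+1}-\balpha_n=E[\bxi_{n+1}\,|\,\mathcal{F}_n]-\bxi_n$ is nonnegative componentwise by the sub-martingale property, so $\{\balpha_n\}$ is increasing (indeed predictable, as each increment is $\mathcal{F}_n$-measurable). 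I would then define $\bY_n=e^{\bet_n}$ and $\bA_n=e^{\balpha_n}$. Since the logarithm turns products into sums, $\bX_n=e^{\bxi_n}=e^{\bet_n+\balpha_n}=\bY_n\bA_n$, which is precisely the asserted factorization, with the normalization $\bA_0=\mathbf{1}$ and $\bY_0=\bX_0$.

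It then remains to confirm that the two factors behave as claimed, and this is where the definitions of the paper must be invoked with care. For $\{\bY_n\}$ to be a bona fide $\ell$-martingale in the sense of Theorem~\ref{mart}, its logarithm $\bet_n$ must be square integrable; this holds because $\bxi_n\in L_2$ by assumption while $\balpha_n$ is a finite sum of conditional expectations of $L_2$ variables and conditional expectation is an $L_2$ contraction, so $\bet_n=\bxi_n-\balpha_n\in L_2$. Granted this, Theorem~\ref{mart} applied to the ordinary martingale $\{\bet_n\}$ yields that $\{\bY_n\}$ is an $\ell$-martingale. For $\bA_n$ one must first fix what ``increasing $M$-valued process'' means: the multiplicative group $M$ carries no intrinsic order, so the correct notion is the order transported from $\mathbb{R}^n$ by $\ln$, namely $\bA_n\le\bA_{n+1}$ coordinatewise, equivalently $\bA_{n+1}/\bA_n=e^{\balpha_{n+1}-\balpha_n}\ge\mathbf{1}$, which is immediate from the nonnegativity of the increments of $\balpha_n$.

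I expect the genuine mathematics here to be entirely classical; the only real obstacles are bookkeeping ones. One is making the notion of an increasing positive-vector process precise before one can even state that $\bA_n$ is increasing, and the other is verifying the square-integrability that the definition of $\ell$-martingale silently presupposes, so that $\bY_n=e^{\bet_n}$ qualifies. If a uniqueness statement is wanted, it transfers at once from the uniqueness of the classical Doob decomposition of $\{\bxi_n\}$ read through the bijection $\bx\mapsto\ln\bx$.
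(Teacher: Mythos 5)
Your proposal is correct and follows exactly the route the paper takes: apply the classical Doob decomposition to $\bxi_n=\ln\bX_n$ and exponentiate the two summands. The paper's proof is a single sentence to this effect, so your version simply supplies the bookkeeping (explicit construction of the predictable increasing part, square-integrability of $\bet_n$, and the transported order on $M$) that the paper leaves implicit.
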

\begin{proof}
Just apply the Doob decomposition theorem to $\bxi_n=\ln\bX_n$ and use $\bX_n = e^{\bxi_n}.$ 
\end{proof}
 
\section{Logarithmic geometry and portfolio theory}
 Let us introduce a slight change of notation to conform with the notation is standard financial modeling. By the generic $R$ we shall denote the (gross) return of any asset of portfolio, which means the quotient of its current value divided by its initial value.  

To begin with, recall from (\ref{geo2}) that the curve $R_1^{w}R_2^{1-w}$ is a geodesic in the logarithmic distance between the points $R_1$ and $R_2.$ That curve can be thought of as a weighted geometric mean of $R_1$ and $R_2.$ This remark leads to variation on the theme of ``return'' of a portfolio. In our setup, a generic portfolio, characterized by the weights $w_1,...,w_K$ of assets with gross returns $R_1,...,R_k,$ has a weighted return given by $\prod_{i=1}^KR_i^{w_i}.$ To push the geodesic interpretation a bit further, that geometric mean can be thought of as a sequence of geodesic walks joining say $R_1$ to $R_K.$ Anyway, the logarithm of the $\ell-$mean, 
\begin{equation}\label{logret}
\ln m_l = \sum_{i=1}^K w_iE[\ln R_i]
\end{equation}
\noindent is clearly the logarithmic rate of growth of the portfolio. Recall as well that the logarithmic distance of $m_\ell$ to $\prod_{i=1}^KR_i^{w_i}$ is given by
\begin{equation}\label{logcov}
d(\prod_{i=1}^KR_i^{w_i},m_\ell)^2 = Var(\sum_{i=1}^K w_i\ln R_i) =(\bw,\bSigma\bw).
\end{equation}
Imitating Markowitz's portfolio theory, we assign to any portfolio $\bw$ its logarithmic mean $\bm_\ell(\bw)$ and its logarithmic variance $\sigma_\ell(\bw).$ According to Markowitz's proposal a portfolio is optimal when it minimizes the variance for a given expected value of its (rate of) return. 

The content of the following proposition can be read in two ways. On one hand it provides a prescription for a choice of portfolio with given average geometric rate of return and minimal logarithmic covariance. On the other hand, it establishes a relationship between that choice of portfolio and the choice according to the Markowitz's proposal based on the logarithmic rate of return.  
\begin{proposition}\label{mark}
With the notations introduced above, the weights $w_i^*,....,w_K^*$ that make the logarithmic variance,  $\sigma_\ell(\bw) = d(\prod_{i=1}^KR_i^{w_i},m_\ell)^2$ minimal subject to the constraints $\sum w_i=1$ and $m_\ell(\bw) = e^{\mu},$ are the same as the weights that minimize $Var\Big(\sum_{i=1}^kw_i\ln R_i\Big)$ subject to $E[\sum_{i=1}^kw_i\ln R_i]=\mu$ and $\sum w_i=1.$
\end{proposition}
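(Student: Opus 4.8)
The plan is to show that the two constrained optimization problems are literally the same problem, so that their minimizers coincide by inspection once the definitions are unpacked. First I would record that the two objective functions are identical. By the identity (\ref{logcov}), the logarithmic variance is
$$\sigma_\ell(\bw) = d\Big(\prod_{i=1}^K R_i^{w_i},m_\ell\Big)^2 = \mathrm{Var}\Big(\sum_{i=1}^K w_i\ln R_i\Big) = (\bw,\bSigma\bw),$$
where $\bSigma$ is the covariance matrix of the log-returns $(\ln R_1,\ldots,\ln R_K)$. Thus the function being minimized is the very same quadratic form $(\bw,\bSigma\bw)$ in both formulations, and there is nothing further to verify on the level of objectives.

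Next I would check that the two feasible sets coincide. The budget constraint $\sum_i w_i = 1$ appears verbatim in both problems, so only the remaining constraint needs attention. Here I would invoke (\ref{logret}), namely $\ln m_\ell(\bw) = \sum_i w_i E[\ln R_i]$. Since the exponential is strictly increasing, the constraint $m_\ell(\bw) = e^{\mu}$ holds if and only if $\ln m_\ell(\bw) = \mu$, that is $\sum_i w_i E[\ln R_i] = \mu$; and by linearity of expectation this is exactly $E[\sum_i w_i\ln R_i] = \mu$. Hence the two constraint sets are the same subset of $\mathbb{R}^K$.

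With an identical objective and identical constraints, the two problems are one and the same, so any weight vector minimizing one minimizes the other, which is the assertion. I do not anticipate a genuine obstacle, since the statement is essentially an unpacking of the two identities (\ref{logcov}) and (\ref{logret}); the only point demanding a moment of care is the equivalence between $m_\ell(\bw) = e^{\mu}$ and $\sum_i w_i E[\ln R_i] = \mu$, which rests on the strict monotonicity of the exponential map, and the recognition that the geometrically defined logarithmic variance coincides with the ordinary variance of the log-returns. To make the common conclusion concrete, I would close by solving the shared problem with Lagrange multipliers: when $\bSigma$ is positive definite the minimizer is unique and given by $\bw^* = \bSigma^{-1}(\lambda\bmu + \gamma\mathbf{1})$, with $\bmu = (E[\ln R_1],\ldots,E[\ln R_K])^t$ and the scalars $\lambda,\gamma$ fixed by the two constraints, thereby reproducing the classical Markowitz efficient-frontier weights applied to the log-returns.
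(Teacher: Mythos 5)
Your argument is correct and is exactly the one the paper intends: the proof in the text is the single line ``The proof is clear from (\ref{logcov})'', and your unpacking --- identical objectives via (\ref{logcov}) and identical constraint sets via (\ref{logret}) together with the strict monotonicity of the exponential --- is just the explicit version of that observation. The closing Lagrange-multiplier computation is a harmless extra beyond what the statement requires.
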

The proof is clear from (\ref{logcov}).
We refer the interested reader to Luenberger (\cite{Lue} or to Shiryaev \cite{Shir} for more details about the classical Markowitz portfolio optimization theory. 

\section{Concluding comments}
In this note we proposed an alternative metric in the set of positive vectors, so that when distance between random variables is measured in this new metric, the standard notions of best predictors, their estimation, some classical convergence results, acquire a different but intuitively related form.

Also, as a simple application to finance, when assets are characterized by their gross returns (which by definition are positive random variables), the concept of return of a portfolio becomes a weighted geometric average, and the standard portfolio choice methodology appears in a slightly different guise. Readers familiar with the basics of the methodology will find it clear that the analogue of the efficient frontier, market portfolio, market line and CAPM have a counterpart within the formalism developed above, but this is not the place to pursue the matters.

\section{Appendix: The logarithmic distance between positive vectors}
We shall think of the vectors in $\mathbb{R}^n$ as functions $\bxi:\{1,...,n\}\rightarrow \mathbb{R},$ and all standard arithmetical operations  either as component wise operations among vectors or point wise operations among functions. Let us denote by $M=\{\bx\in\mathbb{R}^n\,|\bx(i)>0, i=1,...n\}$ the set of all positive vectors. $M$ is an open set in $\mathbb{R}^n$ which is trivially a manifold over $\mathbb{R}^n,$ having $\mathbb{R}^n$ itself as tangent space at each point. We shall use the standard notation $TM_{\bx}$ to stress this point.

Here $M$ plays the role that the positive definite matrices play in the works by Lang, Lawson and Lim and Mohaker mentioned a few lines above. The role of the group of invertible matrices in the same references is to be played here by $G=\{\bg\in\mathbb{R}^n\,|\,g(i)\not=0,\,i=1,...,n\},$ which  clearly is an Abelian group respect to the standard product, in which the identity, denoted by $\be,$ is the vector with all components equal to $1.$  We shall make use the action $G:M\rightarrow M$ of $G$ on $M$ defined by $\tau_{\bg}(\bx)=\bg^{-1}\bx\bg^{-1}.$ This action is clearly transitive on $M,$ and can be defined in the obvious way as an action on $\mathbb{R}^n.$ 

The transitivity of the action allows us to transport the scalar product on $TM_{\be}$ to any $TM_{\bx}$ as follows. The scalar product between $\bxi$ and $\bet$ at $TM_{\be}$ is defined to be the standard Euclidean product $(\bxi,\bet)=\sum \xi_i\eta_i,$ where we shall switch between $\xi(i)$ and $\xi_i$ as need be. Since $\bx = \tau_{\bg}(\be)$ with $\bg=\bx^{-1/2}.$ We define the scalar product transported to $TM_{\bx}$ by
$$(\bxi,\bet)_{\bx} \equiv (\bx^{-1}\bxi,\bx^{-1}\bet) = (\bx^{-2}\bxi,\bet).$$
This scalar product allows us to define the length of a differentiable curve as follows: 

Let $\bx(t)$ be a differentiable curve in $M,$ its length is given by
$$\int_0^1\sqrt{(\dot{\bx},\dot{\bx})_{\bx}}dt.$$
With this definition, the distance between $\bx_1, \bx_2 \in M$ is defined by the expected
\begin{equation}\label{dist1}
d(\bx_1,\bx_2) = \inf\{\int_0^1\sqrt{(\dot{\bx},\dot{\bx})_{\bx})}dt\,|\,\bx(t)\;\;\mbox{differentiable such that}\;\;\bx_1=\bx(0)\;\; \bx_2=\bx(1)\}
\end{equation}
It takes an application of the Euler-Lagrange formula to see that the equation of the geodesics in this metric is
\begin{equation}\label{geo1}
\ddot{\bx}(t) = \bx^{-1}\dot{\bx}^2, \;\;\bx(0)=\bx_1,\;\;\bx(1)=\bx_2,
\end{equation}
\noindent the solution to which is 
\begin{equation}\label{geo2}
\bx(t) = \bx_1e^{t\ln(\bx_2/\bx_1)} = \bx_2^{t}\bx_1^{(1-t)}.
\end{equation}
This allows us to compute the distance between $\bx_1$ and $\bx_2$ as
\begin{equation}\label{dist3}
d(\bx_1,\bx_2)^2 = \sum_{i=1}^n \left(\ln x_1(i)-\ln x_2(i)\right)^2.
\end{equation}
Similarly, the solution to (\ref{geo1}) subject to $\bx(0)=\bx,$ and $\dot{\bx}(0)=\bxi$ is the  (exponential) mapping $\bx e^{t\bxi}.$ With this notations we recall some results (in this simpler setup) from Chapter 5 of Lang (1995) under
\begin{theorem}\label{lang}
With the notations introduced above we have:\\
{\bf 1)} The exponential mapping is metric preserving through the origin.\\
{\bf 2)} The derivative of the exponential mapping is measure preserving, that is, $\exp'(\bxi)\bnu =\bnu e^{\bxi}$ as a mapping $TM_{\bx}\rightarrow TM_{\exp{\bx}},$ satisfies
$$(\bnu,\bnu) = (\exp'(\bxi)\bnu,\exp'(\bxi)\bnu)_{\exp(\bxi)}$$
{\bf 3)} With the metric given by (\ref{dist2}), $M$ is a Bruhat-Tits space, that is it is a complete metric space in which the semi-parallelogram law holds. This means that, given any $\bx_1,\,\bx_2\in M,$ there exists a unique $\bz\in M$ such that for ant $\by\in M$ the following holds
$$d(\bx_1,\bx_2)^2+4d(\bz,\by)^2 \leq 2d(\by,\bx_1)^2 + 2d(\by,\bx_2)^2.$$
\end{theorem}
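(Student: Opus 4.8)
The plan is to reduce all three assertions to elementary facts about Euclidean $\mathbb{R}^n$ by exhibiting the componentwise logarithm as an isometry. Writing $L(\bx)=\ln\bx$, the distance formula (\ref{dist3}) says exactly that $d(\bx_1,\bx_2)=\|L(\bx_1)-L(\bx_2)\|$, with $\|\cdot\|$ the ordinary Euclidean norm on $\mathbb{R}^n$; hence $L$ is a bijective isometry of metric spaces from $(M,d)$ onto $(\mathbb{R}^n,\|\cdot\|)$, with inverse $\by\mapsto e^{\by}$. This single observation carries almost all of the weight. Moreover $L$ is a Riemannian isometry as well: its differential at $\bx$ sends $\bxi\in TM_{\bx}$ to $\bx^{-1}\bxi$, so that, by the very definition of the transported scalar product, $(\bx^{-1}\bxi,\bx^{-1}\bet)=(\bxi,\bet)_{\bx}$, and the pulled-back metric is the flat Euclidean one.

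For assertion (1) I would evaluate the distance formula along a geodesic issuing from the origin. Since $\exp_{\be}(\bxi)=e^{\bxi}$ and $L(\be)=0$, formula (\ref{dist3}) gives $d(\be,\exp_{\be}(\bxi))^2=\sum_{i=1}^n(\ln 1-\xi_i)^2=\|\bxi\|^2$, which is precisely the squared norm of $\bxi$ in $TM_{\be}$; thus the exponential map preserves distances measured from the origin (and, the metric being flat, it is in fact a global isometry, though only the radial statement is needed). Assertion (2) is then a short direct computation: differentiating $\bxi\mapsto e^{\bxi}$ in the direction $\bnu$ gives $\exp'(\bxi)\bnu=\bnu e^{\bxi}$ because the components commute, and evaluating the transported product at the base point $\exp(\bxi)=e^{\bxi}$ yields $(\bnu e^{\bxi},\bnu e^{\bxi})_{e^{\bxi}}=(e^{-\bxi}\bnu e^{\bxi},e^{-\bxi}\bnu e^{\bxi})=(\bnu,\bnu)$, the middle equality being componentwise cancellation.

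For assertion (3), completeness of $(M,d)$ is inherited from completeness of $(\mathbb{R}^n,\|\cdot\|)$ through the isometry $L$. For the semi-parallelogram law I would transport the Euclidean median identity: fix $\by\in M$ and set $\bp=\ln\bx_1$, $\bq=\ln\bx_2$, $\bw=\tfrac12(\bp+\bq)$; since $L$ is an isometry the four distances in the claim equal the Euclidean distances between $\ln\by$ and $\bp,\bq,\bw$, and the Hilbert-space parallelogram law gives $\|\bp-\bq\|^2+4\|\ln\by-\bw\|^2=2\|\ln\by-\bp\|^2+2\|\ln\by-\bq\|^2$. Reading off $\bz=L^{-1}(\bw)=\sqrt{\bx_1\bx_2}$ yields the asserted relation (in fact an equality) for every $\by$. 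Uniqueness of $\bz$ follows by substitution: if $\bz'$ satisfied the inequality for all $\by$, then choosing $\by=\bz$ makes the right-hand side equal to $d(\bx_1,\bx_2)^2$, forcing $4d(\bz',\bz)^2\le 0$ and hence $\bz'=\bz$.

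I expect no serious obstacle; the one point upstream that must be in hand is that (\ref{dist3}) truly is the geodesic distance, i.e. the infimum in (\ref{dist1}) over all connecting curves rather than merely the length of the particular geodesic (\ref{geo2}). This is exactly what the Euler--Lagrange computation together with the flatness of the pulled-back metric secures in the appendix, and I would take it as established; granting it, the three assertions are pure transport plus the one-line uniqueness argument, the only care needed being the correct identification of the differentials of $L$ and of $\exp$ so that the transported scalar product collapses to the Euclidean one.
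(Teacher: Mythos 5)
Your proposal is correct, and it is worth noting that it actually \emph{supplies} a proof where the paper gives none: the paper simply recalls the three assertions from Chapter~5 of Lang and remarks that the arguments are ``systematic and computational,'' that commutativity simplifies them, and that completeness is transferred from $\mathbb{R}^n$ via the exponential map. Your route makes that transfer precise and carries all three items at once: the single observation that $L=\ln$ is a bijective Riemannian isometry from $(M,(\cdot,\cdot)_{\bx})$ onto flat Euclidean $\mathbb{R}^n$ (since $dL_{\bx}\bxi=\bx^{-1}\bxi$ and $(\bx^{-1}\bxi,\bx^{-1}\bet)=(\bxi,\bet)_{\bx}$ by definition) immediately gives the geodesic distance formula (\ref{dist3}) as a genuine infimum over curves, completeness, and the median identity whose pullback is the semi-parallelogram law with $\bz=\sqrt{\bx_1\bx_2}$. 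This is a real gain over deferring to Lang: in the noncommutative matrix setting there is no such global isometry, the exponential is only metric-\emph{semi}-increasing, and the semi-parallelogram relation is an honest inequality, whereas your argument shows that in the commutative case it holds with \emph{equality} — a sharper statement than the theorem asserts. Your closing caveat about whether (\ref{dist3}) is truly the infimum in (\ref{dist1}) is in fact already discharged by your own isometry observation (lengths of curves are preserved by $L$, and in $\mathbb{R}^n$ the infimum is attained by the segment), so the Euler--Lagrange computation in the appendix is not needed for your proof. The uniqueness argument via the substitution $\by=\bz$ is exactly right.
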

{\bf Comments} \\
{\bf 1)} The action $\tau_{\bg}$ defined a few paragraphs above coincides with parallel transport along geodesics.\\
{\bf 2)}The proofs take some space but are systematic and computational. In our case, commutativity makes things considerably simpler. The completeness of $M$ is transferred from $\mathbb{R}^n$ via the exponential mapping.\\
{\bf 3)} The point $\bz$ mentioned in item (3) is given by $\bz=\sqrt{\bx_1\bx_2}.$ Actually, a simple calculation provides the proof of the following slightly more general statement.
\begin{lemma}\label{geomean}
Let $\bx_1,...,\bx_K$ be $K$ points in $M.$ The point $\bar{\bx}_\ell$ that minimizes the sum of logarithmic distances (\ref{dist2}) to the given points is given by their geometric mean, that is
$$\bar{\bx}_\ell = \left(\prod_{j=1}^K\bx_j\right)^{1/K}$$
\end{lemma}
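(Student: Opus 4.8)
The plan is to reduce the minimization to an ordinary Euclidean least-squares problem via the logarithm, exactly as the metric (\ref{dist3}) invites. Setting $\bxi=\ln\bx$ and $\bxi_j=\ln\bx_j$, the logarithm is a bijection of $M$ onto $\mathbb{R}^n$ whose inverse is the exponential map, and under it the quantity to be minimized becomes a sum of squared Euclidean distances in $\mathbb{R}^n$.

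Concretely, I would first write the objective using the explicit distance (\ref{dist3}),
$$F(\bx) \;=\; \sum_{j=1}^K d(\bx,\bx_j)^2 \;=\; \sum_{i=1}^n\sum_{j=1}^K\big(\xi(i)-\xi_j(i)\big)^2,$$
and note that it splits into a sum over the $n$ coordinates, each summand depending only on the single scalar $\xi(i)$. For a fixed coordinate $i$, the map $t\mapsto \sum_{j=1}^K\big(t-\xi_j(i)\big)^2$ is a strictly convex quadratic, so it has a unique minimizer, found by setting the derivative to zero (or by completing the square), namely the arithmetic mean $\bar\xi(i)=\tfrac1K\sum_{j=1}^K\xi_j(i)$.

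Finally I would transport the answer back through the exponential map. Since $\bar\xi(i)=\tfrac1K\sum_j\ln x_j(i)$, exponentiating gives
$$\bar{x}_\ell(i) \;=\; \exp\!\Big(\tfrac1K\textstyle\sum_{j=1}^K\ln x_j(i)\Big) \;=\; \Big(\prod_{j=1}^K x_j(i)\Big)^{1/K},$$
which is the claimed geometric mean. The minimizer automatically lies in $M$ because exponentials are strictly positive, so no feasibility constraint has to be checked.

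I do not expect any genuine obstacle here: the whole force of the argument is that the logarithm flattens the metric to the Euclidean one, after which the conclusion is the textbook fact that the centroid minimizes the sum of squared distances. The only points worth stating explicitly are the strict convexity, which yields uniqueness of $\bar{\bx}_\ell$, and the coordinatewise decoupling, which lets a one-dimensional computation deliver the full vector answer. As a sanity check, the case $K=2$ recovers $\bz=\sqrt{\bx_1\bx_2}$, the midpoint noted after Theorem \ref{lang}.
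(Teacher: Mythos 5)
Your argument is correct and is precisely the ``simple calculation'' the paper alludes to: pass to logarithms, observe that the objective decouples into coordinatewise strictly convex quadratics minimized by the arithmetic mean, and exponentiate back through the bijection $\ln\colon M\to\mathbb{R}^n$. The only point worth flagging is that the statement's phrase ``sum of logarithmic distances'' must be read as the sum of \emph{squared} distances, as in (\ref{dist2}) --- the reading you correctly adopt --- since minimizing the sum of unsquared distances would instead yield a coordinatewise median of the logarithms.
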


\end{document}